\documentclass[reqno]{amsart}
\pdfoutput=1

\usepackage{amsmath}
\usepackage{mathrsfs}
\usepackage{amssymb}
\usepackage{amsthm}
\usepackage[top=95pt,bottom=80pt,left=85pt,right=85pt,footskip=35pt]{geometry}

\usepackage{xcolor}
\usepackage{enumerate}
\usepackage[all]{xy}
\usepackage{tikz}
\usetikzlibrary{shapes,positioning,intersections,quotes,calc}

\colorlet{green}{green!50!black}

\theoremstyle{definition}
\newtheorem{Def}{Definition}

\newtheorem*{Main}{Main Theorem}
\newtheorem{Thm}{Theorem}[section]
\newtheorem{Prop}[Thm]{Proposition}
\newtheorem{Lemma}[Thm]{Lemma}
\newtheorem{Rmk}[Thm]{Remark}

\newcommand{\PP}{\mathbb{P}}
\newcommand{\Cc}{\mathcal{C}}
\newcommand{\Dd}{\mathcal{D}}
\newcommand{\Ee}{\mathcal{E}}
\newcommand{\Kk}{\mathcal{K}}
\newcommand{\Ll}{\mathcal{L}}

\newcommand{\Oo}{\mathcal{O}}
\newcommand{\bb}{\mathfrak{b}}

\newcommand{\kk}{\mathfrak{k}}

\newcommand{\tS}{X}
\newcommand{\Bl}{\operatorname{Bl}}
\newcommand{\Sym}{\operatorname{Sym}}

\title{Bigness of tangent bundles of blow-ups of ruled surfaces}

\author{Hosung Kim and Jeong-Seop Kim}

\address[Hosung Kim]{Department of Mathematics\\
Changwon National University\\
20 Changwondaehak-ro\\
Uichang-gu\\
Changwon-si\\
Gyeongsangnam-do\\
51140 Korea}
\email{hosungkim@changwon.ac.kr}

\address[Jeong-Seop Kim]{Department of Mathematics Education\\
Sunchon National University\\
255 Jungang-ro\\
Suncheon-si\\
Jeonnam-Gwangju\\
57922 Korea}
\email{jeongseop@scnu.ac.kr}

\begin{document}

\maketitle

\begin{abstract}
We determine when the bigness of the tangent bundle is preserved under point blow-ups of a ruled surface $S=\PP_C(\Ee)$, giving a sharp bound on the number of blow-up points in terms of the first Segre invariant $s_1(\Ee)$.
\end{abstract}

\section{Introduction}

Throughout this paper, all varieties are defined over the field of complex numbers.
Positivity properties of the tangent bundle impose strong restrictions on the geometry of a smooth projective variety.
Mori's solution to Hartshorne's conjecture characterizes projective space by the ampleness of~$T_X$ \cite{Mor79}, and the Campana--Peternell conjecture predicts an analogous rigidity for Fano manifolds with nef~$T_X$ \cite{CP91}.
Weaker positivity notions, especially bigness and pseudo-effectivity of $T_X$, have recently been studied from several viewpoints, including VMRT methods, Fano classifications, dynamical rigidity, and surface geometry; see \cite{Hsi15,Mal21,HLS22,HL23,SZ25,JLZ25,KKL25,KKL24}.
In this paper, we say that a vector bundle $V$ is big on $X$ if the tautological line bundle $\Oo_{\PP_X(V)}(1)$ is big on $\PP_X(V)$.

For a ruled surface $S=\PP_C(\Ee)$ over a smooth projective curve $C$, it was proved that $T_S$ is big if and only if $\Ee$ is unstable or $C=\PP^1$ \cite{Kim23}.
It is then natural to ask how this bigness behaves under blow-ups at points.
This is motivated by the known result for blow-ups of $\PP^2$ in general position, where bigness holds precisely for at most $4$ points \cite{HLS22}.
The purpose of this paper is to answer this question for ruled surfaces.

\begin{Main}
Let $C$ be a smooth projective curve, and let $S=\PP_C(\Ee)$ be a ruled surface over~$C$ with the first Segre invariant $s_1(\Ee)=-n$.
Let $\tS$ be the blow-up of $S$ at $m$ points \emph{in general position}.
If $C\simeq \PP^1$, then $T_{\tS}$ is big if and only if either $m\leq 3$ for $n=0,1$ or $m\leq n+1$ for $n\geq 2$.
Otherwise, if~$C\not\simeq\PP^1$, then $T_{\tS}$ is big if and only if $m\leq n-1$ and $n\geq 1$.
\end{Main}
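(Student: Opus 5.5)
We work on $\PP:=\PP_{\tS}(T_{\tS})$ with tautological class $\zeta=\Oo_{\PP}(1)$. Bigness of $\zeta$ is equivalent to the existence of $k>0$ and an effective divisor $D$ on $\tS$ with $H^0(\tS,\Sym^k T_{\tS}\otimes\Oo(-D))\neq0$ modulo a small perturbation. More usefully, it is equivalent to $\zeta$ lying in the interior of the pseudo-effective cone. The plan is therefore to prove two things. On one side, we exhibit explicit effective divisors on $\PP$ whose combination equals $\zeta$ plus something ample (or big). On the other side, we find a movable curve class $\gamma$ on $\PP$ with $\zeta\cdot\gamma\le0$, so that $\zeta$ is not big.

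\textbf{Constructing divisors.} Let $\pi:\tS\to S\to C$ be the composition, with exceptional curves $E_1,\dots,E_m$. Write $C_0\subset S$ for the minimal section, so that $C_0^2=-e$ and the first Segre invariant is $s_1(\Ee)=-n$ with $n=e$ in the unstable case.

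The differential $T_{\tS}\to\pi^*T_C$ gives a surjection whose kernel is the relative tangent line bundle $T_{\tS/C}$. This produces the divisor $\PP(\pi^*T_C)\subset\PP$, of class $\zeta-\pi^*(\text{class of }T_{\tS/C})$ up to the fibers over $E_i$. Each fiber $F$ through a blown-up point becomes $\widetilde F=F-E_i$, a $(-1)$-curve, and its tangent direction gives a divisor of class $\zeta$ restricted there. Hence we obtain divisors $\widetilde{F_i}$-type sections in $\PP$.

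The section $C_0$, with its strict transform $\widetilde C_0$ of self-intersection $-n-(\#\text{points on }C_0)$, gives the distinguished divisor $\PP(N^*)$ coming from $T_{\tS}|_{\widetilde C_0}\to N_{\widetilde C_0}$. Points in general position avoid $C_0$, so its self-intersection stays $-n$.

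Following the method of \cite{Kim23} and \cite{HLS22}, we write $2\zeta$, or a multiple of it, as
\[
a\,\Gamma_{\text{fiber}}+b\,\Gamma_{C_0}+\textstyle\sum c_i\Gamma_{E_i}+(\text{pullback of a divisor on }\tS),
\]
and check that the last term is big on $\tS$ exactly when $m\le n-1$ (for $g\ge1$), respectively $m\le n+1$ (for $C=\PP^1$, $n\ge2$). The tangent direction of $C_0$ contributes a positive amount proportional to $n$. Each blow-up costs one unit through $-\sum E_i$ in $-K_{\tS}$. For $g\ge1$ the term $2-2g$ from $T_C$ is nonpositive, which accounts for the two-unit shift from $n+1$ to $n-1$.

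For $C=\PP^1$ with $n=0,1$, i.e. $S=\PP^1\times\PP^1$ or $\mathbb F_1$, the surface $\tS$ is the blow-up of $\PP^2$ at $m+1$ or $m+2$ points respectively, once the general-position conventions are matched. The \cite{HLS22} bound of $4$ points then gives $m\le3$. We will check carefully that "general position" on $S$ corresponds correctly, since $\mathbb F_0$ blown up at one point is $\PP^2$ blown up at two points.

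\textbf{Non-bigness.} When $m$ exceeds the bound, we take the curve $\gamma$ to be a lift to $\PP$ of a moving family of curves on $\tS$. Following \cite{HLS22}, we use a pencil of strict transforms of fibers or of sections $C_0+kF$ through the points, lifted by their tangent directions; these lifts sweep out a divisor. Covering $\PP$ requires a two-parameter family, and we obtain one by combining the curves with the fibers of $\PP\to\tS$ through the dual pairing. We then compute $\zeta\cdot\gamma=-K\cdot\Gamma$-type quantities, i.e. $2-2g(\Gamma)-\Gamma^2$ adjusted, and show this is $\le0$ precisely beyond the threshold. An alternative is to exhibit a nef class on the cone dual to the pseudoeffective cone.

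\textbf{Main obstacle.} The hardest step is this converse. We need the correct covering family together with the general-position hypothesis, so that no extra sections of $\Sym^kT_{\tS}$ exist. For $g\ge1$ and $\Ee$ semistable ($n\le0$), non-bigness already holds on $S$ and persists after blow-up, since blow-ups only decrease $H^0(\Sym^k T)$. This reduces the work to unstable $\Ee$. There we plan to restrict $\Sym^kT_{\tS}$ to a general strict transform of a section in $|C_0+kF|$ passing through all the points, and bound $h^0$ by the degree.
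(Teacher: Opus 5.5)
The bigness argument for $g(C)\geq1$ is essentially the paper's. The divisor you call $\PP(\pi^*T_C)$ is the total dual VMRT $\breve\Cc=\zeta+\Pi^*K_{\tS/C}$ of the fibers, and (for normalized $\Ee$, $\deg\mathfrak{e}=n$) $-K_{\tS/C}\sim 2C_0+\pi^*\mathfrak{e}-\sum E_i\geq 2C_0+\pi^*(\mathfrak{e}-p_1-\cdots-p_m)$ is big for $m\leq n-1$. The rest of your proposal has genuine gaps.

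For the converse, your proposed witness cannot work. Tangent lifts $\hat\Gamma$ of a pencil of rational curves with $\Gamma^2=0$ only sweep out the divisor $\breve\Cc_\Gamma$, and $\breve\Cc_\Gamma\cdot\hat\Gamma=\Gamma^2+K_{\tS}\cdot\Gamma=-2$. So movability of $a\hat\Gamma+b\ell$ ($\ell$ a fiber of $\Pi$) forces $b\geq 2a$, and then $\zeta\cdot(a\hat\Gamma+b\ell)=b>0$. Restricting $\Sym^kT_{\tS}\otimes\Oo(-A)$ to rational curves $\Gamma$ also never kills sections, since $T_\Gamma\simeq\Oo_{\PP^1}(2)\subset T_{\tS}|_\Gamma$.

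The missing idea is to apply descent of bigness along birational morphisms, which you already use for semistable $\Ee$, to a \emph{different} model of $\tS$.
For $g\geq1$ and $m=n$ (larger $m$ follows by descent), perform elementary transformations at the $n$ points. They avoid $C_0$ and lie on distinct fibers, so a section $D'$ of the new surface $S'=\PP_C(\Ee')$ with $D'^2<0$ would come from a section $D$ with $D^2=D'^2-n+2k<n$. Hence $D=C_0$, whose image has self-intersection $0$, a contradiction. So $\Ee'$ is semistable, $T_{S'}$ is not big by \cite{Kim23}, and $\tS$ is a blow-up of $S'$.
For $C\simeq\PP^1$, $n\geq2$, $m=n+2$, transform back to $\Sigma_1$ and contract to $\PP^2$. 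Then $\tS$ dominates $\Bl_{p_1,\ldots,p_4,q_1}\PP^2$ with $q_1$ infinitely near to $p_1$, a weak del Pezzo surface of degree $4$ whose tangent bundle is not big by \cite{KKL24}.

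For bigness when $C\simeq\PP^1$, $n\geq2$, $m\in\{n,n+1\}$, your decomposition cannot succeed. Your $\Gamma_{C_0}$, $\Gamma_{E_i}$ and the tangent directions of $C_0$ or $\tilde F_i$ are lifts of rigid curves, i.e.\ curves in the threefold $\PP(T_{\tS})$, not divisors. So your only divisors are $\breve\Cc$ and pullbacks, and you would need $-K_{\tS/C}=2C_0+nf-\sum E_i$ to be big. But for $m\geq n$, the class $\gamma=C_0+nf-\sum_{i\in I}E_i$ with $|I|=n$ (the pencil of sections through $n$ of the points) is nef with $-K_{\tS/C}\cdot\gamma=0$, so $-K_{\tS/C}$ is not big. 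Since $-K_{\tS/C}$ does not involve $T_C$, your heuristic that $2-2g$ explains the shift is also wrong: the fiber family alone gives exactly $m\leq n-1$ in every genus.

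The two extra points come from further covering families. For $m=n+1$, take the $n+1$ pencils of sections in $|C_0+nf|$ through all but one of the points. In the model $\tS\simeq\Bl_{p_1,p_2,p_3,q_1,\ldots,q_{n-1}}\PP^2$ these are the lines through $p_2$ and $p_3$ and the conics through $p_1,p_2,p_3,q_j$. Their total dual VMRTs together with the fiber one give $(4n-2)\zeta\sim(\text{effective})+2\Pi^*H$.

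A minor slip: $\Sigma_1$ blown up at $m$ points is $\PP^2$ blown up at $m+1$ points, not $m+2$.
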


The notion of \emph{general position} appearing in the statement above will be made precise in Definition~\ref{general-position}.
The proof of the theorem is given separately in Theorem~\ref{positive-genus} for the case $C\not\simeq \PP^1$ and in Theorem~\ref{genus-zero} for the case $C\simeq\PP^1$.
For the bigness assertion, the proof in the case $C\simeq\PP^1$ relies on explicit calculations of effective divisors on $\PP(T_{\tS})$ arising from families of rational curves, while in the case $C\not\simeq\PP^1$, it follows more directly from the bigness of the relative tangent bundle.
For the non-bigness assertion, we reduce the problem, via elementary transformations, to the cases treated in \cite{KKL24} when $C\simeq\PP^1$ and in \cite{Kim23} when $C\not\simeq\PP^1$.

\section{Preliminaries}

\subsection{Segre invariant}

In this section, we review the theory of ruled surfaces; for a standard reference, see \cite[Section V.2]{Har77}.
For a rank-two vector bundle $\Ee$ on $C$, \emph{the first Segre invariant} $s_1(\Ee)$ is defined by
\[
s_1(\Ee)
=\min\{\deg \Ee-2\deg M \mid M\subset \Ee \text{ is a line subbundle}\}.
\]
By the correspondence between line subbundles of $\Ee$ and sections of the ruled surface $\PP_C(\Ee)$,
\[
s_1(\Ee)=\min\{D^2\mid D\subset \PP_C(\Ee) \text{ is a section}\}.
\]
A section $C_0$ satisfying $C_0^2=s_1(\Ee)$ is called a minimal section.
For a rank-two vector bundle $\Ee$ on~$C$, $\Ee$ is unstable if and only if $s_1(\Ee)<0$.
In this case, the uniqueness of the minimal section follows from the negativity of its self-intersection number.
Indeed, if $C_0$ and $C_1$ are minimal sections, then $C_0\equiv C_1$, and hence $C_0.C_1=C_0^2=s_1(\Ee)<0$.
This contradicts the fact that two distinct irreducible curves have nonnegative intersection number.

\begin{Lemma}\label{unique-minimal-section}
Let $\Ee$ be a rank-two vector bundle on $C$ with $s_1(\Ee)<0$.
Then the minimal section $C_0$ on $\PP_C(\Ee)$ is unique.
\end{Lemma}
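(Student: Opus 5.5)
The plan is to make precise the argument already sketched just before the statement: assume there are two distinct minimal sections and derive a contradiction from their intersection number. Let $C_0$ and $C_1$ be minimal sections on $S=\PP_C(\Ee)$, so $C_0^2=C_1^2=s_1(\Ee)=-n<0$. Suppose, for contradiction, that $C_0\neq C_1$.

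The first step is to show $C_0\equiv C_1$ numerically. Recall from \cite[Section V.2]{Har77} that $\operatorname{Num}(S)$ is generated by the class of a minimal section $C_0$ and a fiber $f$, with $C_0.f=1$ and $f^2=0$. Any section $D$ satisfies $D.f=1$, so $D\equiv C_0+bf$ for some integer $b$. Then
\[
D^2=C_0^2+2b.
\]
Applying this to $D=C_1$ and using $C_1^2=C_0^2$ gives $b=0$, hence $C_1\equiv C_0$.

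Consequently $C_0.C_1=C_0^2=s_1(\Ee)<0$. On the other hand, $C_0$ and $C_1$ are distinct irreducible curves, so they share no common component. Their intersection number is therefore a sum of local intersection multiplicities and is $\geq 0$. This contradiction shows $C_0=C_1$, which proves uniqueness.

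The only step needing care is the numerical equivalence. It relies on the standard description of $\operatorname{Num}(S)$ for a ruled surface and on the fact that a section meets each fiber once; both are standard facts from \cite{Har77}. Note also that existence of a minimal section is automatic, since $\deg\Ee-2\deg M$ is bounded below over line subbundles $M\subset\Ee$, so the minimum in the definition of $s_1(\Ee)$ is attained.
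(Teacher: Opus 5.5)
Your proposal is correct and follows essentially the same argument as the paper: two minimal sections are numerically equivalent, so $C_0.C_1=s_1(\Ee)<0$, which is impossible for distinct irreducible curves. Your explicit derivation of $C_1\equiv C_0$ from $D\equiv C_0+bf$ and $D^2=C_0^2+2b$ just spells out the step the paper states without proof.
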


For the $n$-th Hirzebruch surface $\Sigma_n\simeq \PP_{\PP^1}(\Oo_{\PP^1}\oplus\Oo_{\PP^1}(-n))$ with $n>0$, the section corresponding to the line subbundle $\Oo_{\PP^1}\to \Oo_{\PP^1}\oplus\Oo_{\PP^1}(-n)$ is the unique minimal section $C_0$ of $\Sigma_n$.
It satisfies $C_0^2=-n$.

\begin{Def}\label{general-position}
Let $S=\PP_C(\Ee)$ be a ruled surface associated with an unstable vector bundle $\Ee$ over $C$.

If $s_1(\Ee)<0$, then we say that points $x_1,\ldots,x_m\in S$ are \emph{in general position} if none of them lies on $C_0$ and no two of them lie on the same fiber of $\pi:S\to C$.
Here, $C_0$ denotes the unique section with negative self-intersection $C_0^2=s_1(\Ee)=-n$.

If $s_1(\Ee)=0$ and $C\simeq \PP^1$, then we say that points $x_1,\ldots,x_m\in S$ are \emph{in general position} if no two of them lie on the same fiber of either of the two projections $S\to \PP^1$.
We remark that in this case $S\simeq \PP^1\times\PP^1$.
\end{Def}

In the theorems in the subsequent sections, the above definition specifies which configurations of points are allowed as centers of blow-ups of a ruled surface.


\begin{Lemma}\label{section-self-intersection}
Let $C$ be a smooth projective curve, and let $S=\PP_C(\Ee)$ be a ruled surface over $C$.
Assume that $s_1(\Ee)=-n<0$, and let $C_0$ be the minimal section of~$S$.
If $D$ is an irreducible section of~$S$, then either $D^2=-n$ or $D^2\geq n$.
Moreover, $D^2=-n$ if and only if $D=C_0$.
\end{Lemma}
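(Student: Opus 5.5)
Write $f$ for the class of a fiber of $\pi:S\to C$. Then $\operatorname{Num}(S)$ is generated by $C_0$ and $f$, with $C_0^2=-n$, $C_0.f=1$ and $f^2=0$. The plan is to express the class of $D$ in this basis and then use the intersection number $D.C_0$.

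Since $D$ is a section, $D.f=1$, so $D\equiv C_0+bf$ for some integer $b$. This gives
\[
D^2=-n+2b,\qquad D.C_0=-n+b.
\]

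We now split into two cases.

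\emph{Case $D=C_0$.} Then $D^2=-n$ trivially.

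\emph{Case $D\neq C_0$.} Here $D$ and $C_0$ are distinct irreducible curves, so $D.C_0\geq 0$. This gives $b\geq n$, and therefore
\[
D^2=-n+2b\geq n.
\]

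For the ``moreover'' statement, the two cases together show:
\begin{itemize}
\item if $D\neq C_0$, then $D^2\geq n>-n$, since $n>0$;
\item so $D^2=-n$ forces $D=C_0$;
\item conversely, $C_0^2=s_1(\Ee)=-n$ by definition.
\end{itemize}
This is consistent with Lemma~\ref{unique-minimal-section}.

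The only step needing care is the numerical description of a section as $C_0+bf$. This follows from the structure of $\operatorname{Num}(S)$ for ruled surfaces \cite[Section V.2]{Har77} together with $D.f=1$. Beyond that, the argument is a direct computation.
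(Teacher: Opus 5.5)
Your proof is correct and is essentially the paper's argument: the paper writes $D\sim C_0+\bb f$ with $b=\deg\bb$ (linear rather than numerical equivalence, which makes no difference here), computes $D^2=-n+2b$ and $D.C_0=-n+b$, and uses $D.C_0\geq 0$ for $D\neq C_0$ to get $b\geq n$. Your explicit observation that $D^2\geq n>-n$ whenever $D\neq C_0$ spells out the ``moreover'' part, which the paper leaves implicit.
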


\begin{proof}
Since $D$ is a section of $S$, there exists a divisor $\bb$ on $C$ such that $D\sim C_0+\bb f$.
Put $b=\deg \bb$.
Then $D^2=-n+2b$ and $D.C_0=-n+b$.
If $D=C_0$, then $D^2=-n$.
Assume now that $D\neq C_0$.
Since $D$ and $C_0$ are distinct irreducible curves, we have $D.C_0\geq 0$.
Thus $b\geq n$, and hence $D^2=-n+2b\geq n$.
\end{proof}

\subsection{Elementary transformation}

In this section, we recall elementary transformations of ruled surfaces.
In particular, we use elementary transformations to relate ruled surfaces with different Segre invariants.
For the general theory of elementary transformations, we refer to \cite{Mar82}.

Let $S=\PP_C(\Ee)$ be a ruled surface over a smooth curve $C$, and let $\pi:S\to C$ be the natural projection.
Let $x\in S$ be a point lying over $p\in C$, and let $f_p$ be the fiber of $\pi$ over $p$.
Let $\sigma:\tS\to S$ be the blow-up of $S$ at $x$.
The strict transform of $f_p$ on $\tS$ is a $(-1)$-curve, and hence it can be contracted.
We denote the resulting contraction by $\tau:\tS\to S'$.
Then $S'$ is again a ruled surface over $C$, say $S'=\PP_C(\Ee')$, and we have the following diagram.
\[
\xymatrix{
& \tS \ar[ld]_{\tau} \ar[rd]^{\sigma} & \\
S'=\PP_C(\Ee') && S=\PP_C(\Ee).
}
\]\
This operation is called \emph{the elementary transformation} of $S$ at $x$.

Let $D$ be an irreducible section of $\pi: S\to C$, and let $D'$ be the strict transform of $D$ on $S'$.
Then the self-intersection number of $D'$ is given by
\[
D'^2=
\begin{cases}
D^2-1 & \text{if $x\in D$},\\
D^2+1 & \text{if $x\notin D$}.
\end{cases}
\]

More generally, we can perform elementary transformations at several points simultaneously.
Let $x_1,\ldots,x_m\in S$ be points lying over distinct fibers of $\pi: S\to C$.
We blow up $S$ at $x_1,\ldots,x_m$ and then contract the strict transforms of the fibers over $p_1,\ldots,p_m$.
The resulting surface is again a ruled surface over $C$, which we denote by $S'=\PP_C(\Ee')$.
Let $D$ be a section of $S$, and let $D'$ be its strict transform on $S'$.
If $k$ denotes the number of points among $x_1,\ldots,x_m$ lying on $D$, then
\[
D'^2=D^2+m-2k.
\]
Indeed, each elementary transformation decreases the self-intersection by $1$ when the center lies on the section, and increases it by $1$ otherwise.

Now assume that $\Ee$ is unstable.
By Lemma~\ref{unique-minimal-section}, the ruled surface $S=\PP_C(\Ee)$ has a unique section $C_0$ with negative self-intersection.
With our convention, we have $C_0^2=s_1(\Ee)$.
If all centers $x_1,\ldots,x_m$ lie on $C_0$, then the corresponding section $\overline{C_0}$ on $S'$ satisfies $\overline{C_0}^2=C_0^2-m$.
It follows that $s_1(\Ee')=s_1(\Ee)-m$.

\begin{center}
\begin{figure}[h]
\begin{tikzpicture}[scale=0.87]

\pgfmathsetmacro{\wi}{3.5}
\pgfmathsetmacro{\he}{2.5}
\pgfmathsetmacro{\ri}{1.2}
\pgfmathsetmacro{\up}{2}
\pgfmathsetmacro{\t}{0.2}

\coordinate (a) at (0*\ri*\wi,0);
\coordinate (b) at (1*\ri*\wi,\up);
\coordinate (c) at (2*\ri*\wi,0);

\tikzset{>=latex}
\draw[thick,<-]
(0*\ri*\wi+\wi/2+\t,  \he+\t) --
(1*\ri*\wi-\t,     \he/2+\up+\t);
\draw[thick,->]
(1*\ri*\wi+\wi+\t, \he/2+\up+\t) --
(2*\ri*\wi+\wi/2-\t,     \he+\t);

\def\surface
{(0,0)--(\wi,0)--(\wi,\he)--(0,\he)--cycle;}

\begin{scope}[shift={($(a)$)}]
\foreach \i in {1,...,4} {
\draw[black,densely dashed]
(\i*\wi/5,0)--
(\i*\wi/5,\he);
}

\draw[very thick] (0.3,\he/2)--(\wi-0.3,\he/2);
\node[circle,fill=red,inner sep=0pt,minimum size=3pt] at (\wi/5,\he/2) {};
\node[circle,fill=red,inner sep=0pt,minimum size=3pt] at (2*\wi/5,\he/2) {};
\node[circle,fill=red,inner sep=0pt,minimum size=3pt] at (3*\wi/5,\he/2) {};
\node[circle,fill=red,inner sep=0pt,minimum size=3pt] at (4*\wi/5,\he/2) {};
\node[circle,fill=blue,inner sep=0pt,minimum size=3pt] at (3.5*\wi/5,0.5+\he/2) {};
\node[circle,fill=green,inner sep=0pt,minimum size=3pt] at (1.5*\wi/5,-0.7+\he/2) {};

\draw[thick] \surface;

\node at (\wi/2,-0.4) {\small $S'=\PP(\Ee')$};
\node at (0.35,\he/2-0.4) {\small $\overline{C_0}$};
\end{scope}

\begin{scope}[shift={($(b)$)}]

\draw[very thick] (0.3,\he-0.7)--(\wi-0.3,\he-0.7);
\foreach \i in {1,...,4} {
\draw[red,densely dashed]
(\i*\wi/5,\he) .. controls
(\i*\wi/5,\he/3*2-0.1) ..
(\i*\wi/5-0.2,\he/5*2-0.1);
\draw[black,densely dashed]
(\i*\wi/5,0) .. controls
(\i*\wi/5,\he/3-0.1) ..
(\i*\wi/5-0.2,\he/5*3-0.1);
}
\node[circle,fill=blue,inner sep=0pt,minimum size=3pt] at (3.5*\wi/5,0.9+\he/2) {};
\node[circle,fill=green,inner sep=0pt,minimum size=3pt] at (1.5*\wi/5,-0.7+\he/2) {};

\draw[thick] \surface;

\node at (\wi/2,-0.4) {\small $\tS$};
\end{scope}

\begin{scope}[shift={($(c)$)}]

\foreach \i in {1,...,4} {
\draw[red,densely dashed]
(\i*\wi/5,0)--
(\i*\wi/5,\he);
}
\node[circle,fill=blue,inner sep=0pt,minimum size=3pt] at (3.5*\wi/5,0.5+\he/2) {};
\node[circle,fill=green,inner sep=0pt,minimum size=3pt] at (1.5*\wi/5,-0.7+\he/2) {};

\node[circle,fill=black,inner sep=0pt,minimum size=3pt] at (1*\wi/5,\he/2-0.5) {};
\node[circle,fill=black,inner sep=0pt,minimum size=3pt] at (2*\wi/5,\he/2-0.3) {};
\node[circle,fill=black,inner sep=0pt,minimum size=3pt] at (3*\wi/5,\he/2-1) {};
\node[circle,fill=black,inner sep=0pt,minimum size=3pt] at (4*\wi/5,\he/2-0.7) {};

\draw[very thick] (0.3,\he/2)--(\wi-0.3,\he/2);
\draw[thick] \surface;

\node at (\wi-0.35,\he/2-0.4) {\small $C_0$};
\node at (\wi/2,-0.4) {\small $S=\PP(\Ee)$};
\end{scope}
\end{tikzpicture}\vspace{-1em}
\label{elem}
\caption{Elementary transformation between ruled surfaces with different $s_1$.}
\vspace{-2em}
\end{figure}
\end{center}

In the opposite direction, elementary transformations away from the negative section may increase the first Segre invariant.
We record the following consequence, which will be used later.

\begin{Lemma}\label{unstable-to-semistable}
Let $C$ be a smooth projective curve, and let $S=\PP_C(\Ee)$ be a ruled surface over $C$ such that $s_1(\Ee)=-n<0$.
If $S'=\PP_C(\Ee')$ is the ruled surface obtained from $S$ by taking elementary transformations at $n$ points $x_1,\,\ldots,\,x_n$ in general position, then $\Ee'$ is semistable.
\end{Lemma}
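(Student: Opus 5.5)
We need to show $s_1(\Ee')\geq 0$, i.e. every section $D'$ of $S'$ satisfies $D'^2\geq 0$. The plan is to pull each section of $S'$ back to a section of $S$ and then use the self-intersection formula for elementary transformations together with Lemma~\ref{section-self-intersection}.

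First we set up the inverse correspondence. The elementary transformation is symmetric: $S$ is obtained from $S'$ by elementary transformations at the $n$ points $y_1,\ldots,y_n$, namely the images of the contracted $(-1)$-curves, which lie over the same fibers $p_1,\ldots,p_n$. Strict transform therefore gives a bijection between irreducible sections of $S$ and irreducible sections of $S'$. Thus every irreducible section $D'$ of $S'$ is the strict transform of a unique irreducible section $D$ of $S$. Since the $x_i$ lie on distinct fibers, the formula $D'^2=D^2+m-2k$ applies with $m=n$, where $k$ is the number of $x_i$ lying on $D$.

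We now split into two cases.

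\emph{Case $D=C_0$.} By the general position hypothesis (Definition~\ref{general-position}) no $x_i$ lies on $C_0$, so $k=0$. Hence $D'^2=-n+n=0$.

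\emph{Case $D\neq C_0$.} By Lemma~\ref{section-self-intersection} we have $D^2\geq n$. Since $k\leq n$,
\[
D'^2=D^2+n-2k\geq n+n-2n=0 .
\]

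In both cases $D'^2\geq 0$. Taking the minimum over all sections gives $s_1(\Ee')\geq 0$, and since $\Ee'$ is unstable exactly when $s_1(\Ee')<0$, it follows that $\Ee'$ is semistable. (The first case shows in fact that $s_1(\Ee')=0$.)

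The only delicate point is justifying that every section of $S'$ arises as a strict transform of a section of $S$. A section of $S'$ could a priori pass through some of the points $y_i$. Its strict transform on $\tS$ is still a curve mapping isomorphically onto $C$, so its image in $S$ is again an irreducible section. The self-intersection formula is symmetric in the two directions, so applying it from $S$ to $S'$ for this image is consistent. With that settled, the argument above is only bookkeeping.
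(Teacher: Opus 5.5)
Your proof is correct and is essentially the paper's argument. The paper argues by contradiction: a section $D'$ with $D'^2<0$ forces $D^2<2k-n\le n$, hence $D=C_0$ by Lemma~\ref{section-self-intersection}, and then $k=0$ gives $D'^2=0$. You use the same two ingredients directly as a case split on $D=C_0$ versus $D\neq C_0$, and you also spell out why every section of $S'$ is the strict transform of a section of $S$, which the paper uses without comment.
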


\begin{proof}
Suppose that $\Ee'$ is unstable.
Equivalently, $s_1(\Ee')<0$.
Then there exists an irreducible section $D'$ of $S'$ with $D'^2<0$.
Let $D$ be the strict transform of $D'$ on $S$ under the elementary transformations.
Let $k$ be the number of centers among $x_1,\ldots,x_n$ lying on $D$.
By the self-intersection formula for blow-ups and contractions, we have
\[
D'^2=D^2+n-2k.
\]
Since $D'^2<0$ and $k\leq n$, it follows that $D^2<2k-n\leq n$, and hence $D=C_0$ by Lemma~\ref{section-self-intersection}.

By the general-position assumption, none of the centers $x_1,\ldots,x_n$ lies on $C_0$.
Thus $k=0$, and the above formula gives $D'^2=C_0^2+n=0$, which contradicts $D'^2<0$.
Therefore, $\Ee'$ is semistable.
\end{proof}

\subsection{Criteria for the bigness of vector bundles}

Let $V$ be a vector bundle on a smooth projective variety $X$.
We say that $V$ is \emph{big} 
if the tautological line bundle $\Oo_{\PP(V)}(1)$ is big 
on $\PP(V)$.
We write $\zeta=c_1(\Oo_{\PP(V)}(1))$ and denote the natural projection by $\Pi:\PP_X(V)\to X$.
Throughout the remainder of this paper, we will mainly consider the case $V=T_X$.

\begin{Lemma}[{\cite[Lemma~6]{HLS22}}]\label{big_criterion}
Let $V$ be a vector bundle on a smooth projective variety $X$.
Then $V$ is big if and only if $k\zeta-\Pi^*H$ is pseudoeffective on $\PP_X(V)$ for some $k>0$ and some big divisor $H$ on $X$.
\end{Lemma}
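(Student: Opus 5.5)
We prove the two directions separately, working with the $\mathbb{Q}$-divisor class group of $P=\PP_X(V)$ and writing $\zeta$ and $\Pi$ as in the statement.

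\emph{Sufficiency.} Suppose $k\zeta-\Pi^*H$ is pseudoeffective for some $k>0$ and some big $H$. Since $\Pi^*$ of an ample class is not big on $P$, we first show that $\zeta$ is positive enough in the fiber directions. Choose an ample divisor $A$ on $X$ such that $\zeta+\Pi^*A$ is ample on $P$; this is possible because $\zeta$ is relatively ample over $X$. Next write $H\equiv A'+E$ with $A'$ ample and $E$ effective ($\mathbb{Q}$-divisors). Scaling, we may assume $A'-\epsilon A$ is ample for a small rational $\epsilon>0$. Then
\[
(k+\epsilon)\zeta=(k\zeta-\Pi^*H)+\Pi^*E+\Pi^*(A'-\epsilon A)+\epsilon(\zeta+\Pi^*A).
\]
The first term is pseudoeffective and the second is effective. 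The last two terms together are ample, because $\Pi^*(A'-\epsilon A)$ is nef and $\epsilon(\zeta+\Pi^*A)$ is ample. A pseudoeffective class plus an ample class is big, so $(k+\epsilon)\zeta$ is big, and hence $\zeta$ is big.

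\emph{Necessity.} Suppose $\zeta$ is big. Take any ample $H$ on $X$; then $\Pi^*H$ is nef. Because $\zeta$ lies in the interior of the pseudoeffective cone, which is the big cone, $\zeta-\delta\Pi^*H$ is still big for small rational $\delta>0$. Concretely, by Kodaira's lemma $m\zeta\sim \Pi^*H+F$ with $F$ effective for some $m>0$. So $m\zeta-\Pi^*H$ is effective, and in particular pseudoeffective. Taking $k=m$ finishes this direction.

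The only step requiring care is the sufficiency direction: a big class on $X$ pulled back by $\Pi$ is never big on $P$, so we must add the auxiliary ample class $\zeta+\Pi^*A$ to get interior positivity. Choosing $\epsilon$ small enough that $A'-\epsilon A$ stays ample is what makes the decomposition work.
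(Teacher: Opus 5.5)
Your proof is correct. The identity $(k+\epsilon)\zeta=(k\zeta-\Pi^*H)+\Pi^*E+\Pi^*(A'-\epsilon A)+\epsilon(\zeta+\Pi^*A)$ writes $(k+\epsilon)\zeta$ as a pseudoeffective class plus an ample class, and Kodaira's lemma on $\PP_X(V)$ gives the converse. The paper does not prove this lemma (it is quoted from \cite{HLS22}), and your argument is the standard one behind that citation. One side remark is slightly off: $\Pi^*$ of a big class is big when $\operatorname{rk}V=1$, since then $\PP_X(V)=X$. That remark is never used in your argument, so nothing breaks.
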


In view of Lemma~2.4, the construction of effective divisors on $\PP_X(T_X)$ can be used to establish the bigness of $T_X$.
If $X$ is a smooth projective surface admitting a conic bundle structure $\pi: X\to C$, then the exact sequence
\[
0\to T_{X/C}\to T_X\to \pi^*T_C
\]
gives a nonzero global section of $T_X\otimes T_{X/C}^{\vee}$ that corresponds to the following effective divisor on $\PP(T_X)$ (see also \cite[Lemma~2.13]{HLS22}).
\begin{equation}\label{conic-bundle-formula}
\breve\Cc=\zeta+\Pi^*K_{X/C}.
\end{equation}
This divisor is called \emph{the total dual VMRT} associated to the family of conic fibers of $\pi: X\to C$.
For the detailed theory of total dual VMRTs, see \cite{HR04, OSW16}.

In particular, for a fibration structure $\pi: X\to B$, the bigness of $T_{X/B}$ implies the bigness of $T_X$ by the following criterion.

\begin{Lemma}\label{subbundle_lemma}
Let $W$ be a subbundle of a vector bundle $V$.
If $W$ is big, then $V$ is big.
\end{Lemma}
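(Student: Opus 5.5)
The plan is to realize $\PP_X(W)$ as a closed subvariety of $\PP_X(V)$ on which the tautological bundle of $V$ restricts to that of $W$, and then to push the abundance of sections from the subvariety up to the ambient space. Here $\PP$ denotes the projectivization of rank-one quotients, so that $\Pi_*\Oo_{\PP(V)}(k)=\Sym^k V$. By Lemma~\ref{big_criterion} it is enough to produce, for some $k>0$ and some ample $H$ on $X$, a nonzero section of $\Sym^k V\otimes\Oo_X(-H)$. This is exactly a nonzero element of $H^0(\PP(V),\Oo(k)\otimes\Pi^*\Oo(-H))$, i.e.\ an effective divisor in the class $k\zeta-\Pi^*H$.

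First I use the bigness of $W$. By Lemma~\ref{big_criterion} applied to $W$, some multiple $k\zeta_W-\Pi_W^*H$ is pseudoeffective on $\PP(W)$. Replacing $H$ by a smaller ample divisor and enlarging $k$, I may take this class to be effective. Indeed, if $D$ is pseudoeffective and $A$ is ample, then $2D+\epsilon A$ lies in the interior of the pseudoeffective cone and is big, so a multiple is effective; choosing $A$ to be a small multiple of $\Pi_W^*H$ plus $\zeta_W$ keeps the class in the form $k'\zeta_W-\Pi_W^*H'$. The cleaner route is to use directly that $\Oo_{\PP(W)}(1)$ is big: Kodaira's lemma writes $m\zeta_W=\Pi_W^*H+E$ with $E$ effective. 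This gives $H^0(X,\Sym^m W\otimes\Oo(-H))\neq0$.

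Next I pass from $W$ to $V$. The inclusion $W\hookrightarrow V$ does not induce a map $\Sym^mW\to\Sym^mV$ of the kind I need, because sections of $\Sym^m$ correspond to quotients. The correct observation is that the inclusion $W\subset V$ induces $\Sym^m W\hookrightarrow\Sym^m V$, which is injective on the sheaf level when $W$ is a subsheaf of a torsion-free sheaf on an integral variety. Tensoring with $\Oo(-H)$ and taking global sections preserves injectivity. Hence $H^0(\Sym^mV\otimes\Oo(-H))\supset H^0(\Sym^mW\otimes\Oo(-H))\neq0$, and Lemma~\ref{big_criterion} gives that $V$ is big.

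The main obstacle is the bookkeeping of conventions: bigness of $W$ must translate into a section of $\Sym^mW\otimes\Oo(-H)$ with $H$ ample rather than merely big. The translation goes through Kodaira's lemma on $\PP(W)$, where $\zeta_W+\Pi_W^*A$ is ample for suitable $A$, together with $\Pi_{W*}\Oo(m)=\Sym^mW$. The second point is that the injectivity $\Sym^mW\hookrightarrow\Sym^mV$ holds even when $W$ is only a saturated subsheaf, as with $T_{X/C}\subset T_X$ near singular conic fibers. Injectivity holds generically, where both are bundles, and the source is torsion free, which suffices for the subsheaf case needed later.
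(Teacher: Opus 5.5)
Your argument is correct and is essentially the paper's proof. You obtain a nonzero section of $\Sym^m W\otimes\Oo_X(-H)$ from the bigness of $\Oo_{\PP(W)}(1)$ via Kodaira's lemma (where the paper invokes \cite[Lemma~2.2]{HLS22}), push it through the injection $\Sym^m W\hookrightarrow\Sym^m V$ to a nonzero section of $\Sym^m V\otimes\Oo_X(-H)$, and conclude with Lemma~\ref{big_criterion}.

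Two side remarks are inaccurate, but neither is used. First, with the quotient convention a subbundle $W\subset V$ does not embed $\PP(W)$ in $\PP(V)$; it only induces a linear projection $\PP(V)\dashrightarrow\PP(W)$. Second, injectivity of $\Sym^m W\to\Sym^m V$ requires $\Sym^m W$ to be torsion free. That holds for locally free $W$, such as the line bundle $T_{X/C}$, but not for an arbitrary torsion-free subsheaf.
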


\begin{proof}
The inclusion $W\subset V$ induces an inclusion $\Sym^iW\otimes \Oo_X(-jH)\subset \Sym^iV\otimes \Oo_X(-jH)$.
Thus, if $H^0(X,\Sym^iW\otimes \Oo_X(-jH))\neq 0$, then $H^0(X,\Sym^iV\otimes \Oo_X(-jH))\neq 0$.
Therefore, the statement follows from Lemma~2.4 and \cite[Lemma~2.2]{HLS22}.
\end{proof}

We will also use the following criterion, which describes the behavior of the bigness of tangent bundles under a blow-up.

\begin{Lemma}[{\cite[Lemma 2.4]{HLS22}}]\label{blow-up}
Let $X$ and $Y$ be smooth projective varieties.
Assume that there is a birational morphism $X\to Y$.
If $T_X$ is big, then $T_Y$ is big.
\end{Lemma}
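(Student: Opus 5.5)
The plan is to compare spaces of sections of symmetric powers directly, using the description of bigness of $V$ through the growth of $h^0(X,\Sym^k V)=h^0(\PP_X(V),\Oo_{\PP_X(V)}(k))$. Let $f:X\to Y$ be the birational morphism and $d=\dim X=\dim Y$. Then $\PP_X(T_X)$ and $\PP_Y(T_Y)$ both have dimension $2d-1$. Hence $T_X$ is big if and only if $h^0(X,\Sym^kT_X)\geq c\,k^{2d-1}$ for some $c>0$ and all sufficiently large (divisible) $k$, and likewise for $T_Y$.

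First, consider the differential $df:T_X\to f^*T_Y$. Since $f$ is birational, $df$ is an isomorphism on a dense open subset. As $T_X$ is locally free, hence torsion-free, $df$ is an injective morphism of sheaves. Taking symmetric powers gives a morphism $\Sym^kT_X\to \Sym^k f^*T_Y=f^*\Sym^kT_Y$. This morphism is again generically an isomorphism between locally free sheaves, hence injective. Passing to global sections, we get an injection
\[
H^0(X,\Sym^kT_X)\hookrightarrow H^0(X,f^*\Sym^kT_Y).
\]

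Second, identify the right-hand side with sections on $Y$. By the projection formula, $H^0(X,f^*\Sym^kT_Y)=H^0(Y,\Sym^kT_Y\otimes f_*\Oo_X)$. Since $Y$ is smooth, hence normal, and $f$ is birational and proper, Zariski's main theorem gives $f_*\Oo_X=\Oo_Y$. Therefore
\[
h^0(Y,\Sym^kT_Y)\geq h^0(X,\Sym^kT_X)\geq c\,k^{2d-1},
\]
so $\Oo_{\PP_Y(T_Y)}(1)$ has maximal growth and $T_Y$ is big.

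The only step needing care is the injectivity and the identification $f_*\Oo_X=\Oo_Y$; both are standard. Alternatively, one could argue through Lemma~\ref{big_criterion} by twisting with $-j f^*A$ for $A$ ample on $Y$. In that case, however, one must replace the big divisor on $X$ by a pullback from $Y$, which makes the section-counting argument above the cleaner route.
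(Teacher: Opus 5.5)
Your proof is correct, and every step holds:
\begin{itemize}
\item the map $\Sym^k T_X\to f^*\Sym^k T_Y$ is injective, being a generic isomorphism out of a torsion-free sheaf;
\item the projection formula together with $f_*\Oo_X=\Oo_Y$ gives $H^0(X,f^*\Sym^kT_Y)=H^0(Y,\Sym^kT_Y)$;
\item since $\dim\PP_X(T_X)=\dim\PP_Y(T_Y)=2d-1$, you get $\kappa(\Oo_{\PP_Y(T_Y)}(1))\geq\kappa(\Oo_{\PP_X(T_X)}(1))$, which is what is needed.
\end{itemize}

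The paper does not prove this lemma; it only quotes \cite[Lemma~2.4]{HLS22}. So the relevant comparison is with the argument that fits the paper's own toolkit. That argument runs through Lemma~\ref{big_criterion}, exactly as in the proof of Lemma~\ref{subbundle_lemma}. Fix $A$ ample on $Y$. Since $\zeta$ is big on $\PP_X(T_X)$ and the big cone is open, $k\zeta-\Pi^*f^*A$ is big for $k\gg0$. Hence $H^0(X,\Sym^{k'}T_X\otimes f^*\Oo_Y(-jA))\neq0$ for some $k',j>0$. Your injection and the projection formula then give $H^0(Y,\Sym^{k'}T_Y\otimes\Oo_Y(-jA))\neq0$, and Lemma~\ref{big_criterion} concludes.

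Both routes rest on the same two inputs: injectivity of $\Sym^k df$ and $f_*\Oo_X=\Oo_Y$. Yours replaces the twisting criterion with the growth-of-sections characterization of bigness. In return it yields the slightly stronger monotonicity of Iitaka dimensions of the tautological bundles. The twisted route needs only a single nonvanishing section and plugs directly into Lemma~\ref{big_criterion} and Lemma~\ref{subbundle_lemma}. The step you flag as the drawback of that route, replacing the big divisor on $X$ by a pullback from $Y$, is immediate from openness of the big cone. So neither argument is really harder than the other.
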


\section{Proof of the Main Theorem}

\begin{Prop}\label{big}
Let $S=\PP_C(\Ee)$ be the ruled surface associated with an unstable vector bundle $\Ee$ over a smooth projective curve $C$, and assume that $s_1(\Ee)=-n$.
If $\tS$ is the blow-up of $S$ at $m<n$ points lying on distinct fibers of $\pi: S\to C$, then $T_X$ is big.
\end{Prop}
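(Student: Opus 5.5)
The plan is to use the composite fibration $\pi\circ\sigma:\tS\to C$, where $\sigma:\tS\to S$ is the blow-up, together with Lemma~\ref{subbundle_lemma}. The exact sequence $0\to T_{\tS/C}\to T_{\tS}\to(\pi\circ\sigma)^*T_C$ presents $T_{\tS/C}$ as a saturated rank-one subsheaf of $T_{\tS}$. Because $\tS\to C$ is a conic bundle with reduced singular fibers, I expect this subsheaf to be a line subbundle, namely $T_{\tS/C}=-K_{\tS/C}$. It is convenient to use the equivalent cohomological form of the criterion: a nonzero section of $\Sym^iT_{\tS/C}\otimes\Oo(-jH)$ produces one of $\Sym^iT_{\tS}\otimes\Oo(-jH)$. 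So the whole proof reduces to showing that the line bundle $-K_{\tS/C}$ is big on $\tS$. Equivalently, via~\eqref{conic-bundle-formula} and Lemma~\ref{big_criterion}, the effective divisor $\breve\Cc=\zeta+\Pi^*K_{\tS/C}$ has the form $\zeta-\Pi^*H$ with $H$ big.

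Next I compute $-K_{\tS/C}$ explicitly. On $S$ one has $-K_{S/C}\sim 2C_0+(n-\deg\mathfrak{e}')f$, where numerically $-K_{S/C}\equiv 2C_0+nf$, using $C_0^2=-n$ and the standard formula. Let $E_1,\dots,E_m$ be the exceptional curves and $F_i$ the strict transform of the fiber through $x_i$. Then
\[
-K_{\tS/C}\equiv \sigma^*(2C_0+nf)-\sum_{i=1}^m E_i .
\]
Writing $\sigma^*f=F_i+E_i$ gives
\[
-K_{\tS/C}\equiv 2\sigma^*C_0+(n-m)\sigma^*f+\sum_{i=1}^m F_i .
\]
Since no $x_i$ lies on $C_0$ (the general-position hypothesis; otherwise I would argue separately), $\sigma^*C_0=\tilde C_0$ is the strict transform. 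Each $F_i$ is effective. It therefore suffices to show that $D:=2\tilde C_0+(n-m)\sigma^*f$ is big, because big plus effective is big.

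For the bigness of $D$, I use that $n-m\ge 1$. By Lemma~\ref{section-self-intersection}, or rather the standard description of the nef cone of $S$ for $\Ee$ unstable, the class $C_0+af$ is ample on $S$ when $a>n$ and nef and big when $a=n$. The class $2C_0+kf$ is big on $S$ for every $k>0$, since $2C_0+kf=\tfrac{k}{n}(C_0+nf)+(2-\tfrac{k}{n})C_0$ is a positive multiple of a big class plus an effective one once $k\le 2n$. Since $D=\sigma^*(2C_0+(n-m)f)$ and pullbacks of big divisors under birational morphisms are big, $D$ is big whenever $n-m>0$. This is exactly where the hypothesis $m<n$ enters.

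The main obstacle is verifying that $T_{\tS/C}$ is genuinely a line subbundle, and that nothing is lost at the nodes of the singular fibers $F_i\cup E_i$. If saturation fails there, I would bypass the subbundle argument. Instead I would apply Lemma~\ref{big_criterion} directly to the effective divisor $\breve\Cc=\zeta+\Pi^*K_{\tS/C}$ furnished by~\eqref{conic-bundle-formula}, taking $k=1$ and $H=-K_{\tS/C}$. That version only needs the bigness of $-K_{\tS/C}$ established above.
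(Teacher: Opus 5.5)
Your argument is correct and is essentially the paper's proof. You identify $T_{\tS/C}$ with $-K_{\tS/C}\equiv\sigma^*(2C_0+(n-m)f)+\sum F_i$, which is the paper's $2C_0+\pi^*(\mathfrak{e}-p_1-\cdots-p_m)$ plus an effective divisor; you show it is big and pass to $T_{\tS}$ via Lemma~\ref{subbundle_lemma}, or equivalently via the effective divisor $\breve\Cc=\zeta+\Pi^*K_{\tS/C}$ and Lemma~\ref{big_criterion}, and your decomposition $2C_0+kf=\tfrac{k}{n}(C_0+nf)+(2-\tfrac{k}{n})C_0$ supplies the bigness check the paper leaves implicit. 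The only superfluous point is the appeal to general position, since the proposition assumes only distinct fibers: use $\sigma^*C_0$ in place of $\tilde C_0$ throughout, as you in effect already do when you write $D=\sigma^*(2C_0+(n-m)f)$, and no separate case is needed.
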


\begin{proof}
After twisting $\Ee$ by a line bundle, we may assume that $\Ee=\pi_*\Oo_S(C_0)$.
Let $\mathfrak{e}=\det \Ee^\vee$, and let $E_i$ be the exceptional divisor over $x_i\in S$, where $x_i$ lies on the fiber over $p_i\in C$.
Then the relative tangent bundle of the conic bundle $\tS\to C$ is given by
\[
T_{\tS/C}\sim 2C_0+\pi^*\mathfrak{e}-E_1-E_2-\cdots-E_m.
\]
Note that
\[
T_{\tS/C}
\geq 2C_0+\pi^*(\mathfrak{e}-p_1-p_2-\cdots-p_m),
\]
and the divisor $\mathfrak{e}-p_1-\cdots-p_m$ is big on $C$, since $\deg(\mathfrak{e}-p_1-\cdots-p_m)=n-m>0$.
Thus $T_{\tS/C}$ is big by Lemma~\ref{big_criterion}, and hence $T_{\tS}$ is big by Lemma~\ref{subbundle_lemma}.
\end{proof}

Together with the converse, this yields the following theorem in the case $C\not\simeq \PP^1$.

\begin{Thm}\label{positive-genus}
Let $C$ be a smooth projective curve of genus $g>0$, and let $S=\PP_C(\Ee)$ be a ruled surface over $C$ such that $s_1(\Ee)=-n<0$.
If $\tS$ is the blow-up of $S$ at $m$ points in general position, then $T_{\tS}$ is big if and only if $m\leq n-1$. 
\end{Thm}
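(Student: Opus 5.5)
The sufficiency direction is already Proposition~\ref{big}. Points in general position lie on distinct fibers, so $m\leq n-1$ gives bigness of $T_{\tS}$. It remains to show that $T_{\tS}$ is not big when $m\geq n$.

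First I would reduce to the case $m=n$. If $m>n$, then $\tS$ dominates the blow-up $\tS_n$ of $S$ at any $n$ of the points. By Lemma~\ref{blow-up}, bigness of $T_{\tS}$ would force bigness of $T_{\tS_n}$. Any subset of points in general position is again in general position, so it suffices to prove that $T_{\tS_n}$ is not big.

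For $m=n$, I use the elementary transformation at the $n$ points $x_1,\ldots,x_n$. Blowing up these points gives exactly $\tS$. Contracting the strict transforms of the fibers $f_{p_1},\ldots,f_{p_n}$ gives a birational morphism $\tau:\tS\to S'=\PP_C(\Ee')$. By Lemma~\ref{unstable-to-semistable}, $\Ee'$ is semistable, since the points are in general position. Because $g(C)>0$, the result of \cite{Kim23} quoted in the introduction shows that $T_{S'}$ is not big: $T_{S'}$ is big only if $\Ee'$ is unstable or $C\simeq\PP^1$. Since $\tau:\tS\to S'$ is a birational morphism, Lemma~\ref{blow-up} applied to $\tau$ shows that $T_{\tS}$ is not big, for otherwise $T_{S'}$ would be big.

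I do not expect a serious obstacle here, since every ingredient is stated earlier. The point needing care is that the surface obtained by blowing up the $n$ points really is the middle surface of the elementary transformation. This holds because the points lie on distinct fibers, so the contractions of the fiber strict transforms are independent. One must also check that Lemma~\ref{unstable-to-semistable} uses exactly the general-position hypothesis of Definition~\ref{general-position}: no point on $C_0$ and distinct fibers. Finally, the \cite{Kim23} criterion is applied in the direction semistable and positive genus implies not big.
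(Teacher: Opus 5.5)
Your proposal is correct and follows essentially the same route as the paper. You use Proposition~\ref{big} for bigness, Lemma~\ref{blow-up} to reduce to $m=n$, then the elementary transformation, Lemma~\ref{unstable-to-semistable} to get $\Ee'$ semistable, \cite{Kim23} to conclude $T_{S'}$ is not big, and Lemma~\ref{blow-up} applied to $\tS\to S'$.
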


\begin{proof}
The bigness assertion has already been proved in Proposition~\ref{big}.
It remains to prove the non-bigness assertion.
By Lemma~\ref{blow-up}, it suffices to prove the assertion in the case $m=n$.

By the definition of general position, we may assume that the centers $x_1,\ldots,x_n$ of the blow-up $\tS\to S$ are in general position.
By contracting the strict transforms of the fibers passing through the points $x_i$ on $\tS$, we obtain another ruled surface $S'$ associated to a vector bundle $\Ee'$ over $C$; that is, $S'=\PP(\Ee')$.

By Lemma~\ref{unstable-to-semistable}, the vector bundle $\Ee'$ is semistable.
Hence, by \cite{Kim23}, the tangent bundle $T_{S'}$ of $S'=\PP(\Ee')$ is not big.
Since $\tS$ is also obtained as the blow-up $\tS\to S'$, Lemma~\ref{blow-up} implies that $T_{\tS}$ is not big.
\end{proof}

We next prove the assertion in the case $C\simeq\PP^1$; $\PP_C(\Ee)\cong \Sigma_n$ is the $n$-th Hirzebruch surface.
For the bigness assertion, Proposition~\ref{big} does not apply directly, since this case permits a larger number of blow-up points.
We instead carry out a detailed analysis of certain families of rational curves.

For the non-bigness assertion, we use the standard description of Hirzebruch surfaces in terms of elementary transformations: the surface $\Sigma_n$ is obtained from $\Sigma_1\simeq \operatorname{Bl}_p\PP^2$ by performing an elementary transformation at $n-1$ points lying on the minimal section, i.e., the canonical section.

Assume that $n\geq 2$ and $m\ge n-1$.
Let $X$ be the blow-up of $\Sigma_n$ at $m$ points in general position.
By reversing the above sequence of elementary transformations and then contracting the negative section of $\Sigma_1$, the surface $X$ can be realized as the blow-up of $\PP^2$ at $m-n+2$ closed points and $n-1$ infinitely near points over one of them.
Equivalently, there exist closed points $p_1,\ldots,p_{m-n+2}\in\PP^2$ and pairwise distinct first infinitely near points $q_1,\ldots,q_{n-1}$ over $p_1$, such that the collection $p_1,\ldots,p_{m-n+2},q_1,\ldots,q_{n-1}$ is in general position, and
\[
X \simeq \Bl_{p_1,\ldots,p_{m-n+2},q_1,\ldots,q_{n-1}}\PP^2
\]
as illustrated in the figure below.

\begin{center}
\begin{figure}[h]
\begin{tikzpicture}[scale=0.87]
\vspace{-0.5em}
\pgfmathsetmacro{\wi}{3.5}
\pgfmathsetmacro{\he}{2.5}
\pgfmathsetmacro{\ri}{1.2}
\pgfmathsetmacro{\up}{2}
\pgfmathsetmacro{\t}{0.2}

\coordinate (z) at (-\ri*1.5-\t,\he/2);
\coordinate (a) at (0*\ri*\wi,0);
\coordinate (c) at (1.55*\ri*\wi,0);

\tikzset{>=latex}
\draw[thick,<-]
(-\ri*1.5+\wi/5+\t, \he/2) -- (0-\t, \he/2);
\draw[thick,dotted,<->]
(3.5+\t, \he/2) -- (6.5-\t, \he/2);
\node at (5,\he/2+0.3) {\small elementary};
\node at (5,\he/2-0.3) {\small transformation};

\def\surface
{(0,0)--(\wi,0)--(\wi,\he)--(0,\he)--cycle;}
\def\bisection
{(\wi/2,\he/2-0.1) ellipse (1.2 and 0.5)}

\begin{scope}[shift={($(z)$)}]
\draw[thick,color=red,-stealth] (0,0)--(0.16,0.25);
\node at (0.30,0.35) {\small $q_1$};
\draw[thick,color=red,-stealth] (0,0)--(-0.25,0.16);
\node at (0.35,-0.30) {\small $q_2$};
\draw[thick,color=red,-stealth] (0,0)--(-0.25,-0.16);
\node at (-0.40,-0.25) {\small $q_3$};
\draw[thick,color=red,-stealth] (0,0)--(0.16,-0.25);
\node at (-0.40,0.25) {\small $q_4$};
\node[circle,fill=black,inner sep=0pt,minimum size=3pt] at (0,0) {};
\node at (0,-0.4) {\small $p_1$};
\node[circle,fill=blue,inner sep=0pt,minimum size=3pt] at (0.5,0.5) {};
\node at (0.5,0.73) {\small $p_4$};
\node[circle,fill=green,inner sep=0pt,minimum size=3pt] at (-0.3,-0.7) {};
\node at (-0.3,-0.95) {\small $p_2$};
\node[circle,fill=violet,inner sep=0pt,minimum size=3pt] at (0,0.8) {};
\node at (0,1.05) {\small $p_3$};
\draw[thick] (0,0) ellipse (\wi/4 and \he/2+0.3);

\node at (-0.85,-1.55) {\small $\PP^2$};
\end{scope}

\begin{scope}[shift={($(a)$)}]
\foreach \i in {1,...,4} {
\draw[black,densely dashed]
(\i*\wi/5,0)--
(\i*\wi/5,\he);
}

\draw[very thick] (0.3,\he/2)--(\wi-0.3,\he/2);
\node[circle,fill=red,inner sep=0pt,minimum size=3pt] at (\wi/5,\he/2) {};
\node[circle,fill=red,inner sep=0pt,minimum size=3pt] at (2*\wi/5,\he/2) {};
\node[circle,fill=red,inner sep=0pt,minimum size=3pt] at (3*\wi/5,\he/2) {};
\node[circle,fill=red,inner sep=0pt,minimum size=3pt] at (4*\wi/5,\he/2) {};
\node[circle,fill=blue,inner sep=0pt,minimum size=3pt] at (3.5*\wi/5,0.5+\he/2) {};
\node[circle,fill=green,inner sep=0pt,minimum size=3pt] at (1.5*\wi/5,-0.7+\he/2) {};
\node[circle,fill=violet,inner sep=0pt,minimum size=3pt] at (2.5*\wi/5,0.8+\he/2) {};

\draw[thick] \surface;
\node at (\wi/2,-0.4) {\small $S'=\Sigma_1$};
\end{scope}

\begin{scope}[shift={($(c)$)}]

\foreach \i in {1,...,4} {
\draw[red,densely dashed]
(\i*\wi/5,0)--
(\i*\wi/5,\he);
}
\node[circle,fill=blue,inner sep=0pt,minimum size=3pt] at (3.5*\wi/5,0.5+\he/2) {};
\node[circle,fill=green,inner sep=0pt,minimum size=3pt] at (1.5*\wi/5,-0.7+\he/2) {};
\node[circle,fill=violet,inner sep=0pt,minimum size=3pt] at (2.5*\wi/5,0.8+\he/2) {};

\node[circle,fill=black,inner sep=0pt,minimum size=3pt] at (1*\wi/5,\he/2-0.5) {};
\node[circle,fill=black,inner sep=0pt,minimum size=3pt] at (2*\wi/5,\he/2-0.3) {};
\node[circle,fill=black,inner sep=0pt,minimum size=3pt] at (3*\wi/5,\he/2-1) {};
\node[circle,fill=black,inner sep=0pt,minimum size=3pt] at (4*\wi/5,\he/2-0.7) {};

\draw[very thick] (0.3,\he/2)--(\wi-0.3,\he/2);
\draw[thick] \surface;

\node at (\wi/2,-0.4) {\small $S=\Sigma_5$};
\end{scope}
\end{tikzpicture}
\vspace{-0.5em}
\caption{Realizing the blow-up of $\Sigma_5$ at $7$ points as the blow-up of $\PP^2$ at $4$ closed points $p_1,p_2,p_3,p_4$ and $4$ infinitely near points $q_1,q_2,q_3,q_4$ over $p_1$.}
\vspace{-2em}
\end{figure}
\end{center}

\begin{Rmk}\label{general-position-condition}
By the general-position condition in Definition~1, for every
$2\le i\le m-n+2$ and $1\le j\le n-1$, the point $p_i$ does not lie on the line through $p_1$ corresponding to the tangent direction determined
by $q_j$.
Moreover, no triple $\{p_1,p_i,p_j\}$ with
$1<i<j\le m-n+2$ is collinear.
\end{Rmk}

\begin{Thm}\label{genus-zero}
Let $\tS$ be the blow-up of the $n$-th Hirzebruch surface $\Sigma_n$ at $m$ points in general position.
Then the tangent bundle $T_{\tS}$ is big if and only if
\[
m\leq
\begin{cases}
3 & \text{if $n=0,\,1$},\\
n+1 & \text{otherwise}.
\end{cases}
\]
\end{Thm}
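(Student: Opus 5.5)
The proof splits into a bigness part (construct effective divisors) and a non-bigness part (reduce to known blow-ups of $\PP^2$). By Lemma~\ref{blow-up}, bigness for $m$ points implies bigness for fewer points, so in each direction it suffices to treat one boundary case. For bigness we take $m=3$ when $n\in\{0,1\}$ and $m=n+1$ when $n\ge2$. For non-bigness we take $m=4$ when $n\in\{0,1\}$ and $m=n+2$ when $n\ge2$.

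\emph{Non-bigness.} For $n=1$, $\Sigma_1=\Bl_p\PP^2$, so $\tS$ is the blow-up of $\PP^2$ at $5$ points. The general-position condition makes these points general enough for \cite{HLS22}, which says bigness fails beyond $4$ points. For $n=0$, $\Bl_{4}(\PP^1\times\PP^1)\simeq \Bl_5\PP^2$ via the standard birational map, and the same argument applies. For $n\ge2$ and $m=n+2$, we use the description given above: $\tS\simeq\Bl_{p_1,\ldots,p_4,q_1,\ldots,q_{n-1}}\PP^2$, with the $q_j$ infinitely near to $p_1$ and the configuration restricted as in Remark~\ref{general-position-condition}. Forgetting $q_2,\ldots,q_{n-1}$ via Lemma~\ref{blow-up}, the space $\Bl_{p_1,\ldots,p_4,q_1}\PP^2$ is a blow-up of $\PP^2$ at five points, one of them infinitely near. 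I would check that it falls under the non-big configurations treated in \cite{KKL24}. Alternatively, an elementary transformation at $q_1$ turns it into a blow-up of $\PP^1\times\PP^1$ at $4$ points in general position, which is not big by the previous case. Remark~\ref{general-position-condition} supplies exactly the non-collinearity and non-tangency needed to apply that result.

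\emph{Bigness.} For $n\le1$ with $m\le 3$, $\tS$ is a blow-up of $\PP^2$ at $\le4$ general points, so bigness follows from \cite{HLS22}. For $n\ge2$ and $m=n+1$, Lemma~\ref{big_criterion} reduces the claim to exhibiting an effective divisor of the form $k\zeta-\Pi^*H$ with $H$ big. I would sum total dual VMRTs of several covering families of rational curves on $\tS$:
\begin{enumerate}[(i)]
\item the conic fibration $\tS\to\PP^1$, which gives $\breve\Cc=\zeta+\Pi^*K_{\tS/\PP^1}$ by \eqref{conic-bundle-formula};
\item for each $i$, the pencil of sections in $|C_0+nf|$ through $x_i$, which are disjoint from $C_0$ and become $0$-curves after blowing up, hence a conic bundle $\tS\to\PP^1$ with its own $\breve\Cc_i$;
\item if needed, the family of sections in $|C_0+(n+1)f|$ through all $n+1$ points.
\end{enumerate}
Adding these gives $k\zeta+\Pi^*D$ with $D$ an explicit combination of $C_0$, $f$ and the $E_i$. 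The task is to choose coefficients so that $-D$ is big, i.e.\ $-D$ lies in the interior of the effective cone of $\tS$. I would test this by intersecting $-D$ against the nef classes $f$, $C_0+nf$ and $C_0+nf-E_i$.

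\emph{Main obstacle.} I expect the hardest step to be this bigness computation for $m=n+1$. Fibration divisors alone typically give only pseudoeffective, not big, combinations at the boundary. It may be necessary to use the non-conic family in (iii), whose dual VMRT divisor has class $a\zeta+\Pi^*(\cdot)$ with $a>1$, and to compute its class via the dimension count of the family and its degree on a general fiber of $\Pi$. If this is still insufficient, I would instead pass to $\PP^2$ with the infinitely near points and use the lines through $p_1$ together with the conics through $p_1,\ldots,p_{3}$ and the tangent directions.
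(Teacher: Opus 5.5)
There is a genuine gap in the bigness direction for $n\ge 2$, which is the real content of the theorem. Your reductions, the case $n\le 1$, and the non-bigness for $n\ge 2$ via \cite{KKL24} follow the paper; the paper identifies $\Bl_{p_1,\ldots,p_4,q_1}\PP^2$ as a weak del Pezzo surface of degree $4$ with one or two $(-2)$-curves, and you should say this too.

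The bigness argument fails for two reasons. First, family (ii) is mis-specified. Sections in $|C_0+nf|$ through a single point $x_i$ form an $n$-dimensional linear system, not a pencil. Since $(C_0+nf)^2=n$, their strict transforms have self-intersection $n-1$, so they are conic-bundle fibers only when $n=1$. The pencils you want are the sections in $|C_0+nf|$ through the $n$ points $x_k$ with $k\ne i$. Their class is $C_0+nf-\sum_{k\ne i}E_k$, with $E_k$ the exceptional curve over $x_k$; general position is what makes these pencils base-point free on $\tS$. Second, you never exhibit coefficients. Your expectation that conic-bundle divisors alone cannot reach a big class is wrong, so family (iii), whose dual VMRT class you would still have to compute, is unnecessary. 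With the corrected pencils, \eqref{conic-bundle-formula} and $K_{\tS}=-2C_0-(n+2)f+\sum_k E_k$ give, for the ruling and for the $i$-th pencil,
\[
\breve\Cc_0=\zeta+\Pi^*\Bigl(-2C_0-nf+\sum_{k=1}^{n+1}E_k\Bigr),\qquad
\breve\Cc_i=\zeta+\Pi^*\Bigl((n-2)f+E_i-\sum_{k\ne i}E_k\Bigr).
\]
Hence
\[
(n-1)\breve\Cc_0+\sum_{i=1}^{n+1}\breve\Cc_i\sim 2n\zeta-2\Pi^*\bigl((n-1)C_0+f\bigr).
\]
For $n\ge 2$, $(n-1)C_0+f$ is the pullback of a big class on $\Sigma_n$, so Lemma~\ref{big_criterion} gives bigness.

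This is the paper's proof in other coordinates. Under $\tS\simeq\Bl_{p_1,p_2,p_3,q_1,\ldots,q_{n-1}}\PP^2$, the ruling is the pencil of lines through $p_1$. The $n+1$ corrected pencils are the lines through $p_2$, the lines through $p_3$, and the conics through $p_1,p_2,p_3,q_j$, which are exactly the families in your fallback. The paper weights them asymmetrically and adds the effective term $(4n-6)\Pi^*C_0$ (the paper's $E_1$ is $C_0$) to reach $(4n-2)\zeta-2\Pi^*H$; the symmetric weights above avoid that term.

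Separately, drop your alternative non-bigness route through $\PP^1\times\PP^1$. $\Bl_{p_1,\ldots,p_4,q_1}\PP^2$ contains a $(-2)$-curve, the strict transform of the exceptional curve over $p_1$. So it is not del Pezzo in any model, and your \cite{HLS22}-based ``previous case'' does not apply to it. The weak del Pezzo result of \cite{KKL24} is genuinely needed there.
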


\begin{proof}
For $n=0,\,1$, the Hirzebruch surface $\Sigma_n$ is a del Pezzo surface of degree $8$.
Indeed, $\Sigma_0\simeq\PP^1\times\PP^1$ and $\Sigma_1\simeq \Bl_p\PP^2$.
Moreover, if $m\leq 7$ and the blown-up points are in general position, then the blow-up $\tS$ of $\Sigma_n$ at these $m$ points is again a del Pezzo surface of degree $8-m$.
By \cite{HLS22}, it follows that $T_{\tS}$ is big if and only if $m\leq 3$.

Now let $n\geq 2$.
We first prove the non-bigness assertion: if $m\geq n+2$, then $T_{\tS}$ is not big.
By Lemma~\ref{blow-up}, it suffices to prove the assertion in the case $m=n+2$.
Let $\tS$ be the blow-up of $\Sigma_n$ at $n+2$ points in general position.
As described above, we may realize $\tS$ as $\Bl_{p_1,\ldots,p_4,q_1,\ldots,q_{n-1}}\PP^2$, where $p_1,\ldots,p_4$ are closed points of $\PP^2$ and the points $q_1,\ldots,q_{n-1}$ are infinitely near to $p_1$.
By Remark~\ref{general-position-condition}, $\Bl_{p_1,\ldots,p_4,q_1}\PP^2$ is a weak del Pezzo surface of degree~$4$ with either exactly one $(-2)$-curve or exactly two $(-2)$-curves and 8 lines.
If $\{p_2,p_3,p_4\}$ is not collinear, then $\Bl_{p_1,\ldots,p_4,q_1}\PP^2$ has only one $(-2)$-curve, otherwise, $\Bl_{p_1,\ldots,p_4,q_1}\PP^2$ has two $(-2)$-curves and $8$ lines.
By \cite[Theorem~1.2]{KKL24}, its tangent bundle is not big.
Since the tangent bundle of $\Bl_{p_1,\ldots,p_4,q_1}\PP^2$ is not big, Lemma~\ref{blow-up} implies that the tangent bundle of $\tS\simeq \Bl_{p_1,\ldots,p_4,q_1,\ldots,q_{n-1}}\PP^2$ is also not big.
Therefore, $T_{\tS}$ is not big.

We next prove the bigness assertion.
By Lemma~\ref{blow-up}, it suffices to consider the case $m=n+1$.
Let $\tS$ be the blow-up of $\Sigma_n$ at $n+1$ points in general position.
As before, we may realize $\tS$ as $\Bl_{p_1,p_2,p_3,q_1,\ldots,q_{n-1}}\PP^2$, where $p_1,p_2,p_3$ are closed points of $\PP^2$ and the points $q_1,\ldots,q_{n-1}$ are infinitely near to $p_1$.

Let $\Kk_i$ be the family of strict transforms $C_i$ of lines in $\PP^2$ passing through $p_i$ for $i=1,\,2,\,3$, and let $\Ll_j$ be the family of strict transforms $D_j$ of conics in $\PP^2$  passing through $p_1$, $p_2$, $p_3$, and $q_j$ for $j=1,2,\ldots,n-1$.
By Remark~3.3, the families \(\mathcal K_i\) and \(\mathcal L_j\) are pencils of lines and conics, respectively.
Then
\begin{align*}
C_1&\sim H-E_1-F_1-\cdots-F_{n-1},\\
C_2&\sim H-E_2,\\
C_3&\sim H-E_3,\\
D_j&\sim 2H-E_1-E_2-E_3-F_1-\cdots-2F_j-\cdots-F_{n-1}\quad
\text{for $j=1,\,2,\,\ldots,\,n-1$},
\end{align*}
where $H$ denotes the pullback of a line in $\PP^2$, $E_i$ denotes the strict transform of the exceptional divisor over $p_i$, and $F_j$ denotes the exceptional divisor over $q_j$.
By \eqref{conic-bundle-formula}, the total dual VMRTs $\breve\Cc_i$ and $\breve\Dd_j$ associated to $\Kk_i$ and $\Ll_j$, respectively, are given as follows for $i=1,2,3$ and $j=1,\ldots,n-1$.
\begin{align*}
\breve\Cc_1
&\sim \zeta-\Pi^*H-\Pi^*E_1+\Pi^*E_2+\Pi^*E_3\\
\breve\Cc_2
&\sim \zeta-\Pi^*H+\Pi^*E_1-\Pi^*E_2+\Pi^*E_3+2\Pi^*F_1+\cdots+2\Pi^*F_{n-1}\\
\breve\Cc_3
&\sim \zeta-\Pi^*H+\Pi^*E_1+\Pi^*E_2-\Pi^*E_3+2\Pi^*F_1+\cdots+2\Pi^*F_{n-1}\\
\breve\Dd_j
&\sim \zeta+\Pi^*H-\Pi^*E_1-\Pi^*E_2-\Pi^*E_3-2\Pi^*F_j\quad
\text{for $j=1,\,2,\,\ldots,\,n-1$}
\end{align*}
Then a positive multiple of $\zeta$ can be written as
\[
(4n-2)\zeta
\sim
\left(
2(n-1)\breve\Cc_1
+\breve\Cc_2
+\breve\Cc_3
+2\sum_{j=1}^{n-1}\breve\Dd_j
+(4n-6)\Pi^*E_1
\right)
+2\Pi^*H
\]
where the term in parentheses is an effective divisor and $2\Pi^*H$ is the pullback of a big divisor on $X$.
Therefore, $T_X$ is big by Lemma~\ref{big_criterion}.
\end{proof}

\subsection*{Acknowledgements}
The authors would like to express their sincere gratitude to Yongnam Lee and YongJoo Shin for their valuable and insightful discussions.

\end{document}